\newtheorem{theorem}{Theorem}[section]
\newtheorem{lemma}[theorem]{Lemma}
\newtheorem{corollary}[theorem]{Corollary}
\theoremstyle{definition}
\newtheorem{definition}[theorem]{Definition}
\newtheorem{example}[theorem]{Example}
\theoremstyle{remark}
\numberwithin{equation}{section}
\begin{document}
\setcounter{page}{1}
\title[Continuous $\ast$-g-Frame in Hilbert $C^{\ast}$-Modules]{Continuous $\ast$-g-Frame in Hilbert $C^{\ast}$-Modules}

	\author{Mohamed ROSSAFI}
	\address{Department of Mathematics
		\newline \indent University of Ibn Tofail
		\newline \indent B.P.133 Kenitra, Morocco}
	\email{rossafimohamed@gmail.com}

\author{ Samir KABBAJ}
\address{Department of Mathematics
	\newline \indent University of Ibn Tofail
	\newline \indent B.P.133 Kenitra, Morocco}
\email{samkabbaj@yahoo.fr }

\subjclass[2010]{Primary 41A58; Secondary 42C15.}

\keywords{Continuous Frame, $\ast$-g-frame, Continuous $\ast$-g-Frame, $C^{\ast}$-algebra, Hilbert $\mathcal{A}$-modules.}

\begin{abstract}
	In this paper, we introduce the concept of Continuous $\ast$-g-Frame in Hilbert $C^{\ast}$-Modules and we establish some results. We also discuss the stability problem for Continuous $\ast$-g-Frame.
\end{abstract} \maketitle

\section{Introduction and preliminaries}
The concept of frames in Hilbert spaces has been introduced by
Duffin and Schaeffer \cite{Duf} in 1952 to study some deep problems in nonharmonic Fourier
series, after the fundamental paper \cite{13} by Daubechies, Grossman and Meyer, frame
theory began to be widely used, particularly in the more specialized context of wavelet
frames and Gabor frames \cite{Gab}.

Traditionally, frames have been used in signal processing, image processing, data compression
and sampling in sampling theory. A discreet frame is a countable family of
elements in a separable Hilbert space which allows for a stable, not necessarily unique,
decomposition of an arbitrary element into an expansion of the frame elements. The
concept of a generalization of frames to a family indexed by some locally compact space
endowed with a Radon measure was proposed by G. Kaiser \cite{15} and independently by Ali,
Antoine and Gazeau \cite{11}. These frames are known as continuous frames. Gabardo and
Han in \cite{14} called these frames frames associated with measurable spaces, Askari-Hemmat,
Dehghan and Radjabalipour in \cite{12} called them generalized frames and in mathematical
physics they are referred to as coherent states \cite{11}. 

In this paper, we introduce the notion of Continuous $\ast$-g-Frame which are generalization of $\ast$-g-Frame in Hilbert $C^{\ast}$-Modules introduced by A.Alijani \cite{Ali} and we establish some results.

The paper is organized as follows, we continue this introductory section we briefly recall the definitions and basic properties of $C^{\ast}$-algebra, Hilbert $C^{\ast}$-modules. In Section 2, we introduce the Continuous $\ast$-g-Frame, the Continuous pre-$\ast$-g-frame operator and the Continuous $\ast$-g-frame operator. In Section 3, we discuss the stability problem for Continuous $\ast$-g-Frame.

In the following we briefly recall the definitions and basic properties of $C^{\ast}$-algebra, Hilbert $\mathcal{A}$-modules. Our reference for $C^{\ast}$-algebras is \cite{{Dav},{Con}}. For a $C^{\ast}$-algebra $\mathcal{A}$ if $a\in\mathcal{A}$ is positive we write $a\geq 0$ and $\mathcal{A}^{+}$ denotes the set of positive elements of $\mathcal{A}$.
	\begin{definition}\cite{Con}.
	If $\mathcal{A}$ is a Banach algebra, an involution is a map $ a\rightarrow a^{\ast} $ of $\mathcal{A}$ into itself such that for all $a$ and $b$ in $\mathcal{A}$ and all scalars $\alpha$ the following conditions hold:
	\begin{enumerate}
		\item  $(a^{\ast})^{\ast}=a$.
		\item  $(ab)^{\ast}=b^{\ast}a^{\ast}$.
		\item  $(\alpha a+b)^{\ast}=\bar{\alpha}a^{\ast}+b^{\ast}$.
	\end{enumerate}
\end{definition}
\begin{definition}\cite{Con}.
	A $\mathcal{C}^{\ast}$-algebra $\mathcal{A}$ is a Banach algebra with involution such that :$$\|a^{\ast}a\|=\|a\|^{2}$$ for every $a$ in $\mathcal{A}$.
\end{definition}
\begin{example}
	$ \mathcal{B}=B(\mathcal{H}) $	the algebra of bounded operators on a Hilbert space, is a  $\mathcal{C}^{\ast}$-algebra, where for each operator $A$, $A^{\ast}$ is the adjoint of $A$.
\end{example}
\begin{definition}\cite{Kap}.
	Let $ \mathcal{A} $ be a unital $C^{\ast}$-algebra and $\mathcal{H}$ be a left $ \mathcal{A} $-module, such that the linear structures of $\mathcal{A}$ and $ \mathcal{H} $ are compatible. $\mathcal{H}$ is a pre-Hilbert $\mathcal{A}$-module if $\mathcal{H}$ is equipped with an $\mathcal{A}$-valued inner product $\langle.,.\rangle :\mathcal{H}\times\mathcal{H}\rightarrow\mathcal{A}$, such that is sesquilinear, positive definite and respects the module action. In the other words,
	\begin{itemize}
		\item [(i)] $ \langle x,x\rangle\geq0 $ for all $ x\in\mathcal{H} $ and $ \langle x,x\rangle=0$ if and only if $x=0$.
		\item [(ii)] $\langle ax+y,z\rangle=a\langle x,y\rangle+\langle y,z\rangle$ for all $a\in\mathcal{A}$ and $x,y,z\in\mathcal{H}$.
		\item[(iii)] $ \langle x,y\rangle=\langle y,x\rangle^{\ast} $ for all $x,y\in\mathcal{H}$.
	\end{itemize}	 
\end{definition}
For $x\in\mathcal{H}, $ we define $||x||=||\langle x,x\rangle||^{\frac{1}{2}}$. If $\mathcal{H}$ is complete with $||.||$, it is called a Hilbert $\mathcal{A}$-module or a Hilbert $C^{\ast}$-module over $\mathcal{A}$. For every $a$ in $C^{\ast}$-algebra $\mathcal{A}$, we have $|a|=(a^{\ast}a)^{\frac{1}{2}}$ and the $\mathcal{A}$-valued norm on $\mathcal{H}$ is defined by $|x|=\langle x, x\rangle^{\frac{1}{2}}$ for $x\in\mathcal{H}$.

The following lemmas will be used to prove our mains results
\begin{lemma} \label{1} \cite{Pas}.
	Let $\mathcal{H}$ be Hilbert $\mathcal{A}$-module. If $T\in End_{\mathcal{A}}^{\ast}(\mathcal{H})$, then $$\langle Tx,Tx\rangle\leq\|T\|^{2}\langle x,x\rangle, \forall x\in\mathcal{H}.$$
\end{lemma}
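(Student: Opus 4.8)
The plan is to rewrite the left-hand side using the adjoint and then reduce the claim to the positivity of a single operator. Since $T\in End_{\mathcal{A}}^{\ast}(\mathcal{H})$ is adjointable, for every $x\in\mathcal{H}$ we have $\langle Tx,Tx\rangle=\langle T^{\ast}Tx,x\rangle$. Setting $S=T^{\ast}T$, the element $S$ is a self-adjoint, positive member of the $C^{\ast}$-algebra $End_{\mathcal{A}}^{\ast}(\mathcal{H})$, and the target inequality becomes $\langle Sx,x\rangle\leq\|T\|^{2}\langle x,x\rangle$. So the whole statement reduces to controlling $S$ by the scalar operator $\|T\|^{2}I$.

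First I would record the norm identity $\|S\|=\|T^{\ast}T\|=\|T\|^{2}$, which is just the $C^{\ast}$-identity applied inside the operator algebra $End_{\mathcal{A}}^{\ast}(\mathcal{H})$. Next, I would invoke the standard $C^{\ast}$-algebra fact that any positive element $a$ satisfies $a\leq\|a\|\,1$; applied to $S$ in $End_{\mathcal{A}}^{\ast}(\mathcal{H})$ this yields the operator inequality $S\leq\|T\|^{2}I$, equivalently that $P:=\|T\|^{2}I-S$ is a positive operator. Being positive in the operator $C^{\ast}$-algebra, $P$ can be written as $P=R^{\ast}R$ for some $R\in End_{\mathcal{A}}^{\ast}(\mathcal{H})$.

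Finally, I would evaluate $P$ against $x$. For every $x\in\mathcal{H}$,
$$\langle Px,x\rangle=\langle R^{\ast}Rx,x\rangle=\langle Rx,Rx\rangle\geq0,$$
so that $\|T\|^{2}\langle x,x\rangle-\langle Sx,x\rangle\geq0$. Recalling $\langle Sx,x\rangle=\langle Tx,Tx\rangle$ then gives exactly $\langle Tx,Tx\rangle\leq\|T\|^{2}\langle x,x\rangle$, as required.

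I expect the only genuine subtlety to be the passage between the two notions of positivity: positivity of the element $P$ in the abstract $C^{\ast}$-algebra $End_{\mathcal{A}}^{\ast}(\mathcal{H})$ (coming from the functional-calculus bound $a\leq\|a\|\,1$) versus the module-level statement $\langle Px,x\rangle\geq0$. This is handled cleanly by the factorization $P=R^{\ast}R$, which turns the abstract positivity into a concrete inner product of the form $\langle Rx,Rx\rangle$. Everything else is routine once one accepts that $End_{\mathcal{A}}^{\ast}(\mathcal{H})$ is a $C^{\ast}$-algebra satisfying the $C^{\ast}$-identity.
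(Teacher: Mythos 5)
Your argument is correct and complete: reducing to $S=T^{\ast}T$, invoking $a\leq\|a\|1$ for positive elements of the $C^{\ast}$-algebra $End_{\mathcal{A}}^{\ast}(\mathcal{H})$, and converting the operator inequality into the module inequality via the factorization $P=R^{\ast}R$ (with $R=P^{1/2}$ still adjointable) is exactly the standard proof. The paper itself gives no proof of this lemma, only a citation to Paschke, and your argument is essentially the one found in that reference, so there is nothing to flag.
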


\begin{lemma} \label{sb} \cite{Ara}.
Let $\mathcal{H}$ and $\mathcal{K}$ two Hilbert $\mathcal{A}$-modules and $T\in End^{\ast}(\mathcal{H},\mathcal{K})$. Then the following statements are equivalent:
\begin{itemize}
	\item [(i)] $T$ is surjective.
	\item [(ii)] $T^{\ast}$ is bounded below with respect to norm, i.e., there is $m>0$ such that $\|T^{\ast}x\|\geq m\|x\|$ for all $x\in\mathcal{K}$.
	\item [(iii)] $T^{\ast}$ is bounded below with respect to the inner product, i.e., there is $m'>0$ such that $\langle T^{\ast}x,T^{\ast}x\rangle\geq m'\langle x,x\rangle$ for all $x\in\mathcal{K}$.
\end{itemize}
\end{lemma}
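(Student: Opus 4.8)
The plan is to prove the cycle $(iii)\Rightarrow(ii)\Rightarrow(i)\Rightarrow(iii)$, funnelling the two nontrivial implications through the positive self-adjoint operator $S=TT^{\ast}\in End^{\ast}(\mathcal{K})$ and showing that each condition is equivalent to the invertibility of $S$. Throughout I would lean on two standard facts for Hilbert $\mathcal{A}$-modules: the Cauchy--Schwarz inequality $\|\langle x,y\rangle\|\leq\|x\|\,\|y\|$, and the monotonicity of the $C^{\ast}$-norm on positive elements (if $0\leq a\leq b$ then $\|a\|\leq\|b\|$), together with the identity $\|T^{\ast}x\|^{2}=\|\langle T^{\ast}x,T^{\ast}x\rangle\|$. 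The implication $(iii)\Rightarrow(ii)$ is then immediate: from $\langle T^{\ast}x,T^{\ast}x\rangle\geq m'\langle x,x\rangle\geq0$ I would take $C^{\ast}$-norms and apply monotonicity to obtain $\|T^{\ast}x\|^{2}\geq m'\|x\|^{2}$, so $(ii)$ holds with $m=\sqrt{m'}$.

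For $(ii)\Rightarrow(i)$, starting from $\|T^{\ast}x\|\geq m\|x\|$ I would first control $S=TT^{\ast}$. Since $\langle Sx,x\rangle=\langle T^{\ast}x,T^{\ast}x\rangle$, Cauchy--Schwarz gives $\|Sx\|\,\|x\|\geq\|\langle Sx,x\rangle\|=\|T^{\ast}x\|^{2}\geq m^{2}\|x\|^{2}$, hence $\|Sx\|\geq m^{2}\|x\|$. Thus the positive self-adjoint operator $S$ is bounded below in norm, so it has closed range; because $S=S^{\ast}$, its range is orthogonally complemented with $\mathcal{K}=\operatorname{Ran}(S)\oplus\ker(S)$, and $\ker(S)=\{0\}$ forces $\operatorname{Ran}(S)=\mathcal{K}$, so $S$ is invertible in $End^{\ast}(\mathcal{K})$. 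Then $T\bigl(T^{\ast}S^{-1}y\bigr)=TT^{\ast}S^{-1}y=y$ for every $y\in\mathcal{K}$, which exhibits $T$ as surjective.

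For $(i)\Rightarrow(iii)$, I would use that an adjointable operator is bounded, so a surjective $T$ between the Banach spaces $\mathcal{H}$ and $\mathcal{K}$ is open: there is $c>0$ such that every $y\in\mathcal{K}$ admits $x\in\mathcal{H}$ with $Tx=y$ and $\|x\|\leq c\|y\|$. Then $\langle y,y\rangle=\langle Tx,y\rangle=\langle x,T^{\ast}y\rangle$, and Cauchy--Schwarz gives $\|y\|^{2}=\|\langle y,y\rangle\|\leq\|x\|\,\|T^{\ast}y\|\leq c\|y\|\,\|T^{\ast}y\|$, so $\|T^{\ast}y\|\geq c^{-1}\|y\|$; this is $(ii)$. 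By the estimate established in the previous step, $S=TT^{\ast}$ is then positive and invertible, whence $S\geq\|S^{-1}\|^{-1}I$ as an operator inequality. Applying this to $\langle T^{\ast}x,T^{\ast}x\rangle=\langle Sx,x\rangle$ yields $\langle T^{\ast}x,T^{\ast}x\rangle\geq\|S^{-1}\|^{-1}\langle x,x\rangle$, which is $(iii)$ with $m'=\|S^{-1}\|^{-1}$.

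The hard part is the module-specific step inside $(ii)\Rightarrow(i)$: in a Hilbert $C^{\ast}$-module a trivial orthogonal complement does not by itself force density, so I cannot deduce surjectivity merely from $\ker(T^{\ast})=\{0\}$. The genuine input is that an adjointable operator bounded below in norm has closed range, and that the closed range of a self-adjoint adjointable operator is orthogonally complemented; this is exactly what upgrades ``injective with closed range'' to ``invertible''. Likewise, the passage from the scalar estimate $(ii)$ to the order estimate $(iii)$ cannot be carried out by hand and really needs the invertibility of $TT^{\ast}$, which is why I route both nontrivial implications through $S$.
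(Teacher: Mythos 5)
The paper does not prove this lemma at all: it is quoted verbatim from the reference \cite{Ara} and used as a black box, so there is no in-paper argument to compare yours against. Judged on its own, your cyclic proof $(iii)\Rightarrow(ii)\Rightarrow(i)\Rightarrow(iii)$ is correct. The step $(iii)\Rightarrow(ii)$ via monotonicity of the norm on positive elements is fine; the estimate $\|Sx\|\geq m^{2}\|x\|$ for $S=TT^{\ast}$ via Cauchy--Schwarz is fine; and you correctly isolate the one genuinely module-specific ingredient, namely that for an adjointable operator with closed range the range is orthogonally complemented, $\mathcal{K}=\operatorname{Ran}(S)\oplus\ker(S^{\ast})$ (Lance's closed-range theorem) --- without this, injectivity plus trivial orthogonal complement would not give surjectivity of $S$ in a Hilbert $C^{\ast}$-module, so flagging it explicitly is the right call. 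The passage $(i)\Rightarrow(ii)$ by the open mapping theorem and the upgrade from the scalar bound to the order bound $\langle T^{\ast}x,T^{\ast}x\rangle\geq\|S^{-1}\|^{-1}\langle x,x\rangle$ via the spectral inequality $S\geq\|S^{-1}\|^{-1}I$ for positive invertible $S$ are both sound (using that positivity of an adjointable operator is equivalent to $\langle Px,x\rangle\geq 0$ for all $x$). Routing both nontrivial implications through the invertibility of $TT^{\ast}$ is in fact the standard way this equivalence is established in the literature, so your proposal is a legitimate self-contained substitute for the citation; the only cosmetic redundancy is that $(i)\Rightarrow(iii)$ re-derives $(ii)$ and then re-runs the $(ii)\Rightarrow$ invertibility argument, which you could streamline by proving directly that each of $(i)$, $(ii)$, $(iii)$ is equivalent to invertibility of $S$.
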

\begin{lemma} \label{3} \cite{Deh}.
	Let $\mathcal{H}$ and $\mathcal{K}$ two Hilbert $\mathcal{A}$-modules and $T\in End^{\ast}(\mathcal{H},\mathcal{K})$. Then:
	\begin{itemize}
		\item [(i)] If $T$ is injective and $T$ has closed range, then the adjointable map $T^{\ast}T$ is invertible and $$\|(T^{\ast}T)^{-1}\|^{-1}\leq T^{\ast}T\leq\|T\|^{2}.$$
		\item  [(ii)]	If $T$ is surjective, then the adjointable map $TT^{\ast}$ is invertible and $$\|(TT^{\ast})^{-1}\|^{-1}\leq TT^{\ast}\leq\|T\|^{2}.$$
	\end{itemize}	
\end{lemma}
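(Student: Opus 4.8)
The plan is to establish (ii) first and then deduce (i) from it by passing to the adjoint, since the two parts are related by the identity $(T^{\ast})(T^{\ast})^{\ast}=T^{\ast}T$.

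For (ii), suppose $T$ is surjective. By Lemma \ref{sb} the adjoint $T^{\ast}$ is bounded below with respect to the inner product, so there is $m'>0$ with $\langle T^{\ast}x,T^{\ast}x\rangle\geq m'\langle x,x\rangle$ for all $x\in\mathcal{K}$. Since $\langle T^{\ast}x,T^{\ast}x\rangle=\langle TT^{\ast}x,x\rangle$, the positive operator $S:=TT^{\ast}$ satisfies $\langle Sx,x\rangle\geq m'\langle x,x\rangle$. I would first convert this into a norm estimate: using $\|Sx\|\,\|x\|\geq\|\langle Sx,x\rangle\|\geq m'\|x\|^{2}$ (the middle step uses that $0\leq a\leq b$ forces $\|a\|\leq\|b\|$ in a $C^{\ast}$-algebra) one gets $\|Sx\|\geq m'\|x\|$, so $S$ is injective and bounded below in norm. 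As $S$ is self-adjoint, Lemma \ref{sb} applied to $S$ then yields that $S$ is also surjective, hence bijective; a bijective adjointable map has adjointable inverse, so $TT^{\ast}$ is invertible.

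For the two-sided estimate I regard $S=TT^{\ast}$ as a positive, now invertible, element of the $C^{\ast}$-algebra $End_{\mathcal{A}}^{\ast}(\mathcal{K})$. The upper bound is immediate from Lemma \ref{1}, since $\langle TT^{\ast}x,x\rangle=\langle T^{\ast}x,T^{\ast}x\rangle\leq\|T^{\ast}\|^{2}\langle x,x\rangle=\|T\|^{2}\langle x,x\rangle$, giving $TT^{\ast}\leq\|T\|^{2}$. For the lower bound I use that a positive invertible element $a$ of a $C^{\ast}$-algebra satisfies $\|a^{-1}\|^{-1}\leq a$; concretely, applying $S^{-1}\leq\|S^{-1}\|$ to $y=S^{1/2}x$ gives $\langle x,x\rangle=\langle S^{-1}S^{1/2}x,S^{1/2}x\rangle\leq\|S^{-1}\|\langle Sx,x\rangle$, which is exactly $\|(TT^{\ast})^{-1}\|^{-1}\leq TT^{\ast}$.

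For (i), assume $T$ is injective with closed range. Then $T$ is a continuous linear bijection of $\mathcal{H}$ onto the closed, hence complete, subspace $\operatorname{Ran}(T)$ of $\mathcal{K}$, so by the open mapping theorem its inverse is bounded and $\|Tx\|\geq m\|x\|$ for some $m>0$; that is, $T$ is bounded below in norm. Applying Lemma \ref{sb} to the operator $T^{\ast}$ (whose adjoint is $T$), this bounded-below condition says precisely that $T^{\ast}$ is surjective. I then invoke part (ii) with $T^{\ast}$ in place of $T$: since $(T^{\ast})(T^{\ast})^{\ast}=T^{\ast}T$ and $\|T^{\ast}\|=\|T\|$, it follows that $T^{\ast}T$ is invertible and $\|(T^{\ast}T)^{-1}\|^{-1}\leq T^{\ast}T\leq\|T\|^{2}$. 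The main obstacle is the invertibility step in (ii): one cannot simply import the Hilbert-space open-mapping reasoning for the inner-product inequality, and must instead move carefully between the norm and inner-product formulations of ``bounded below'' and combine injectivity with surjectivity. Lemma \ref{sb} is the exact device that legitimizes this transition in the $C^{\ast}$-module setting, so the whole argument hinges on applying it twice, once to $S=TT^{\ast}$ and once to $T^{\ast}$.
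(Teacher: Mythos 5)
The paper offers no proof of this lemma at all: it is imported verbatim from the reference \cite{Deh} as a preliminary, so there is no internal argument to compare yours against. Judged on its own merits, your proof is correct. The reduction of (i) to (ii) via the open mapping theorem on the closed range and Lemma \ref{sb} applied to $T^{\ast}$ is clean, and in (ii) the passage from ``$T$ surjective'' to ``$TT^{\ast}$ bounded below in norm'' to ``$TT^{\ast}$ surjective'' (using Lemma \ref{sb} a second time on the self-adjoint operator $S=TT^{\ast}$) is exactly the right way to get invertibility without pretending one is in a Hilbert space. The computation $\langle x,x\rangle=\langle S^{-1}S^{1/2}x,S^{1/2}x\rangle\leq\|S^{-1}\|\langle Sx,x\rangle$ correctly yields the lower bound $\|(TT^{\ast})^{-1}\|^{-1}\leq TT^{\ast}$. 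The only ingredient you use silently and should cite is the standard fact that for an adjointable operator $R$ on a Hilbert $\mathcal{A}$-module, positivity of $R$ as an element of the $C^{\ast}$-algebra $End_{\mathcal{A}}^{\ast}(\mathcal{K})$ is equivalent to $\langle Rx,x\rangle\geq0$ for all $x$ (Paschke); you invoke this equivalence in both directions, once to convert the pointwise inequality $\langle TT^{\ast}x,x\rangle\leq\|T\|^{2}\langle x,x\rangle$ into the operator inequality $TT^{\ast}\leq\|T\|^{2}$, and once to apply $S^{-1}\leq\|S^{-1}\|$ pointwise at $y=S^{1/2}x$. Likewise, the existence of $S^{1/2}$ and the adjointability of the inverse of a bijective adjointable map are standard but deserve a one-line justification. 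With those references added, your argument is a complete and self-contained proof of a statement the paper leaves entirely to the literature.
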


\section{Continuous $\ast$-g-Frame in Hilbert $C^{\ast}$-Modules}
Let $X$ be a Banach space, $(\Omega,\mu)$ a measure space, and function $f:\Omega\to X$ a measurable function. Integral of the Banach-valued function $f$ has defined Bochner and others. Most properties of this integral are similar to those of the integral of real-valued functions. Because every $C^{\ast}$-algebra and Hilbert $C^{\ast}$-module is a Banach space thus we can use this integral and its properties.

Let $(\Omega,\mu)$ be a measure space, let $U$ and $V$ be two Hilbert $C^{\ast}$-modules, $\{V_{w}: w\in\Omega\}$  is a sequence of subspaces of V, and $End_{\mathcal{A}}^{\ast}(U,V_{w})$ is the collection of all adjointable $\mathcal{A}$-linear maps from $U$ into $V_{w}$.
We define
\begin{equation*}
\oplus_{w\in\Omega}V_{w}=\left\{x=\{x_{w}\}: x_{w}\in V_{w}, \left\|\int_{\Omega}|x_{w}|^{2}d\mu(w)\right\|<\infty\right\}.
\end{equation*}
For any $x=\{x_{w}: w\in\Omega\}$ and $y=\{y_{w}: w\in\Omega\}$, if the $\mathcal{A}$-valued inner product is defined by $\langle x,y\rangle=\int_{\Omega}\langle x_{w},y_{w}\rangle d\mu(w)$, the norm is defined by $\|x\|=\|\langle x,x\rangle\|^{\frac{1}{2}}$, the $\oplus_{w\in\Omega}V_{w}$ is a Hilbert $C^{\ast}$-module.
\begin{definition}
	We call $\{\Lambda_{w}\in End_{\mathcal{A}}^{\ast}(U,V_{w}): w\in\Omega\}$ a continuous $\ast$-g-frame for Hilbert $C^{\ast}$-module $U$ with respect to $\{V_{w}: w\in\Omega\}$ if
	\begin{itemize}
		\item for any $x\in U$, the function $\tilde{x}:\Omega\rightarrow V_{w}$ defined by $\tilde{x}(w)=\Lambda_{w}x$ is measurable;
		\item there exist two strictly nonzero elements $A$ and $B$ in $\mathcal{A}$ such that
		\begin{equation} \label{2.1}
		A\langle x,x\rangle A^{\ast}\leq\int_{\Omega}\langle\Lambda_{w}x,\Lambda_{w}x\rangle d\mu(w)\leq B\langle x,x\rangle B^{\ast}, \forall x\in U.
		\end{equation}
	\end{itemize}
		The elements $A$ and $B$ are called continuous $\ast$-g-frame bounds. If $A=B$ we call this continuous $\ast$-g-frame a continuous tight $\ast$-g-frame, and if $A=B=1_{\mathcal{A}}$ it is called a continuous Parseval $\ast$-g-frame. If only the right-hand inequality of \eqref{2.1} is satisfied, we call $\{\Lambda_{w}: w\in\Omega\}$ a 
		continuous $\ast$-g-Bessel for $U$ with respect to $\{\Lambda_{w}: w\in\Omega\}$ with Bessel bound $B$.
	
\end{definition}
We mentioned that the set of all continuous g-frames in  Hilbert $C^{\ast}$-Modules can be considered as a subset of continuous $\ast$-g-frame. To illustrate this, let $\{\Lambda_{w}: w\in\Omega\}$ be a continuous g-frames for Hilbert $C^{\ast}$-Modules $U$ with real continuous g-frames bounds $A$ and $B$. note that for all $x\in U$,
	\begin{equation*}
(\sqrt{A})1_{\mathcal{A}}\langle x,x\rangle(\sqrt{A})1_{\mathcal{A}}\leq\int_{\Omega}\langle\Lambda_{w}x,\Lambda_{w}x\rangle d\mu(w)\leq (\sqrt{B})1_{\mathcal{A}}\langle x,x\rangle(\sqrt{B})1_{\mathcal{A}}.
\end{equation*}
Therefore, every continuous g-frames in  Hilbert $C^{\ast}$-Modules $U$ with real bounds $A$ and $B$ is a continuous $\ast$-g-frame in $U$ with $\mathcal{A}$-valued bounds $(\sqrt{A})1_{\mathcal{A}}$ and $(\sqrt{B})1_{\mathcal{A}}$.
\begin{theorem} \label{2.3}
	Let $\{\Lambda_{w}\in End_{\mathcal{A}}^{\ast}(U,V_{w}): w\in\Omega\}$ be a continuous $\ast$-g-frame for $U$, with lower and upper bounds $A$ and $B$, respectively. The continuous $\ast$-g-frame transform $T:U\rightarrow\oplus_{w\in\Omega}V_{w}$ defined by: $Tx=\{\Lambda_{w}x: w\in\Omega\}$ is injective, closed range adjointable and $\|T\|\leq\|B\|$. The adjoint operator $T^{\ast}$ is surjective and it given by: $T^{\ast}x=\int_{\Omega}\Lambda_{w}^{\ast}x_{w}d\mu(w)$ where $x=\{x_{w}\}_{w\in\Omega}$.
\end{theorem}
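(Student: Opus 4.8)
The plan is to read off every claimed property of $T$ directly from the two frame inequalities in \eqref{2.1}, using only that on $\oplus_{w\in\Omega}V_{w}$ one has $\langle Tx,Tx\rangle=\int_{\Omega}\langle\Lambda_{w}x,\Lambda_{w}x\rangle\,d\mu(w)$ by the very definition of the inner product on the direct sum. First I would settle well-definedness and the norm bound simultaneously. From the right-hand inequality, $\langle Tx,Tx\rangle\le B\langle x,x\rangle B^{\ast}$; since both sides are positive, passing to norms and using $\|B^{\ast}\|=\|B\|$ gives $\|Tx\|^{2}=\|\langle Tx,Tx\rangle\|\le\|B\langle x,x\rangle B^{\ast}\|\le\|B\|^{2}\|x\|^{2}$. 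In particular $\left\|\int_{\Omega}|(Tx)_{w}|^{2}\,d\mu(w)\right\|<\infty$, so $Tx\in\oplus_{w\in\Omega}V_{w}$, and $\|T\|\le\|B\|$.

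Next I would exhibit the adjoint explicitly. Define $S:\oplus_{w\in\Omega}V_{w}\to U$ by $Sy=\int_{\Omega}\Lambda_{w}^{\ast}y_{w}\,d\mu(w)$ for $y=\{y_{w}\}$. For $x\in U$ and $y\in\oplus_{w\in\Omega}V_{w}$, the adjoint relation $\langle\Lambda_{w}x,y_{w}\rangle=\langle x,\Lambda_{w}^{\ast}y_{w}\rangle$, together with the definition of the inner product on the direct sum and the fact that the involution commutes with the Bochner integral, yields $\langle Tx,y\rangle=\int_{\Omega}\langle\Lambda_{w}x,y_{w}\rangle\,d\mu(w)=\int_{\Omega}\langle x,\Lambda_{w}^{\ast}y_{w}\rangle\,d\mu(w)=\langle x,Sy\rangle$. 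Hence $T$ is adjointable with $T^{\ast}=S$, which is exactly the asserted formula. The point that needs care here is that $S$ is genuinely well defined, i.e.\ that the Bochner integral $\int_{\Omega}\Lambda_{w}^{\ast}y_{w}\,d\mu(w)$ converges in $U$; this is where the $\ast$-g-Bessel bound $B$ must be invoked, and I regard it as the main technical obstacle of the theorem.

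Finally I would obtain injectivity, closed range, and surjectivity of $T^{\ast}$ from the left-hand inequality. Writing $c=\langle x,x\rangle^{1/2}$ (a positive, hence self-adjoint element), we have $A\langle x,x\rangle A^{\ast}=(Ac)(Ac)^{\ast}$, so $\|A\langle x,x\rangle A^{\ast}\|=\|Ac\|^{2}\ge\|A^{-1}\|^{-2}\|c\|^{2}=\|A^{-1}\|^{-2}\|x\|^{2}$, using the invertibility of the lower bound $A$ and $\|c\|^{2}=\|\langle x,x\rangle\|=\|x\|^{2}$. Combined with $\|A\langle x,x\rangle A^{\ast}\|\le\|\langle Tx,Tx\rangle\|=\|Tx\|^{2}$ coming from \eqref{2.1}, this gives the lower estimate $\|Tx\|\ge\|A^{-1}\|^{-1}\|x\|$, so $T$ is bounded below; hence $T$ is injective and, by completeness of the modules, has closed range. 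Since $T=(T^{\ast})^{\ast}$ is bounded below, Lemma \ref{sb} applied to the operator $T^{\ast}$ shows that $T^{\ast}$ is surjective, completing the proof.
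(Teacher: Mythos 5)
Your proposal follows essentially the same route as the paper: the frame inequality is rewritten as $A\langle x,x\rangle A^{\ast}\leq\langle Tx,Tx\rangle\leq B\langle x,x\rangle B^{\ast}$, the upper bound gives $\|T\|\leq\|B\|$, the adjoint is exhibited by the same direct computation $\langle Tx,y\rangle=\langle x,\int_{\Omega}\Lambda_{w}^{\ast}y_{w}\,d\mu(w)\rangle$, and surjectivity of $T^{\ast}$ comes from the lower norm bound $\|Tx\|\geq\|A^{-1}\|^{-1}\|x\|$ via Lemma \ref{sb}; your packaging of injectivity and closed range as consequences of being bounded below is just a streamlined version of the paper's explicit Cauchy-sequence argument. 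The one point you flag but do not settle --- convergence of the Bochner integral defining $T^{\ast}$ --- is likewise left implicit in the paper, so it is not a divergence between the two proofs.
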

\begin{proof}
	Let $x\in U$, by the definition of a continuous $\ast$-g-frame for $U$ we have
	\begin{equation*}
A\langle x,x\rangle A^{\ast}\leq\int_{\Omega}\langle\Lambda_{w}x,\Lambda_{w}x\rangle d\mu(w)\leq B\langle x,x\rangle B^{\ast}.
	\end{equation*}
	So
	\begin{equation}\label{1}
A\langle x,x\rangle A^{\ast}\leq\langle Tx,Tx\rangle\leq B\langle x,x\rangle B^{\ast}.
	\end{equation}
	If $Tx=0$ then $\langle x,x\rangle=0$ so $x=0$ i.e. $T$ is injective.
	
	We now show that the range of $T$ is closed. Let $\{Tx_{n}\}_{n\in\mathbb{N}}$ be a sequence in the range of $T$ such that $\lim_{n\rightarrow\infty}Tx_{n}=y.$
	
	By \eqref{1} we have, for $n, m\in\mathbb{N}$,
	\begin{equation*}
\|A\langle x_{n}-x_{m},x_{n}-x_{m}\rangle A^{\ast}\|\leq\|\langle T(x_{n}-x_{m}),T(x_{n}-x_{m})\rangle\|=\|T(x_{n}-x_{m})\|^{2}.
	\end{equation*}
	Since $\{Tx_{n}\}_{n\in\mathbb{N}}$ is Cauchy sequence in $U$,
	
	$\|A\langle x_{n}-x_{m},x_{n}-x_{m}\rangle A^{\ast}\|\rightarrow0$, as $n,m\rightarrow\infty.$
	
	Note that for $n, m\in\mathbb{N}$,
	$$\|\langle x_{n}-x_{m},x_{n}-x_{m}\rangle\|=\|A^{-1}A\langle x_{n}-x_{m},x_{n}-x_{m}\rangle A^{\ast}(A^{\ast})^{-1}\|$$

$$\leq\|A^{-1}\|^{2}\|A\langle x_{n}-x_{m},x_{n}-x_{m}\rangle A^{\ast}\|.$$
	Therefore the sequence $\{x_{n}\}_{n\in\mathbb{N}}$ is Cauchy and hence there exists $x\in U$ such that $x_{n}\rightarrow x$ as $n\rightarrow\infty$. Again by \eqref{1} we have $\|T(x_{n}-x)\|^{2}\leq\|B\|^{2}\|\langle x_{n}-x,x_{n}-x\rangle\|$.
	
	Thus $\|Tx_{n}-Tx\|\rightarrow0$ as $n\rightarrow\infty$ implies that $Tx=y$. It concludes that the range of $T$ is closed.
	
	For all $x\in U$, $y=\{y_{w}\}\in\oplus_{w\in\Omega}V_{w}$, we have
	\begin{equation*}
	\langle Tx,y\rangle=\int_{\Omega}\langle\Lambda_{w}x,y_{w}\rangle d\mu(w)=\int_{\Omega}\langle x,\Lambda_{w}^{\ast}y_{w}\rangle d\mu(w)=\left\langle x,\int_{\Omega}\Lambda_{w}^{\ast}y_{w}d\mu(w)\right\rangle.
	\end{equation*}
Then $T$ is adjointable and $T^{\ast}y=\int_{\Omega}\Lambda_{w}^{\ast}y_{w}d\mu(w).$

By \eqref{1} we have $\|Tx\|^{2}\leq\|B\|^{2}\|x\|^{2}$ so $\|T\|\leq\|B\|.$

By \eqref{1} we have $\|Tx\|\geq\|A^{-1}\|^{-1}\|x\|$ $\forall x\in U$ so by lemma \ref{sb} $T^{\ast}$ is surjective.
This completes the proof.
\end{proof}
Now we define the continuous $\ast$-g-frame operator and studies some of its properties.
\begin{definition}
Let $\{\Lambda_{w}\in End_{\mathcal{A}}^{\ast}(U,V_{w}): w\in\Omega\}$ be a continuous $\ast$-g-frame for $U$. Define the continuous $\ast$-g-frame operator $S$ on $U$ by: $Sx=T^{\ast}Tx=\int_{\Omega}\Lambda_{w}^{\ast}\Lambda_{w}xd\mu(w)$.
\end{definition}
\begin{theorem}
The continuous $\ast$-g-frame operator $S$ is a bounded, positive, selfadjoint, invertible and $\|A^{-1}\|^{-2}\leq\|S\|\leq\|B\|^{2}$.
\end{theorem}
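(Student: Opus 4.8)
The plan is to read off every asserted property of $S=T^{\ast}T$ from the structural facts about $T$ already recorded in Theorem \ref{2.3} and Lemma \ref{3}. Since $S$ is built as $T^{\ast}T$, the algebraic properties are essentially formal, and only the two norm estimates demand genuine work.

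First I would dispose of the formal properties. Selfadjointness is immediate: $S^{\ast}=(T^{\ast}T)^{\ast}=T^{\ast}T=S$. Positivity follows by moving $T$ across the inner product, $\langle Sx,x\rangle=\langle T^{\ast}Tx,x\rangle=\langle Tx,Tx\rangle\geq 0$ for every $x\in U$, using that the $\mathcal{A}$-valued inner product is positive. For boundedness and the upper estimate I would invoke the $C^{\ast}$-identity for adjointable operators, $\|S\|=\|T^{\ast}T\|=\|T\|^{2}$, and then use $\|T\|\leq\|B\|$ from Theorem \ref{2.3} to conclude $\|S\|\leq\|B\|^{2}$. Invertibility is handed to us directly: Theorem \ref{2.3} establishes that $T$ is injective with closed range, so Lemma \ref{3}(i) applies and $T^{\ast}T=S$ is invertible.

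The remaining point, the lower estimate $\|A^{-1}\|^{-2}\leq\|S\|$, is where the real content lies. The input is the inequality $\|Tx\|\geq\|A^{-1}\|^{-1}\|x\|$ established inside the proof of Theorem \ref{2.3}. I would combine this with the Cauchy--Schwarz inequality for Hilbert $C^{\ast}$-modules, $\|\langle Sx,x\rangle\|\leq\|Sx\|\,\|x\|$, after observing $\langle Sx,x\rangle=\langle Tx,Tx\rangle$, whence $\|\langle Sx,x\rangle\|=\|Tx\|^{2}\geq\|A^{-1}\|^{-2}\|x\|^{2}$. Putting these together yields $\|Sx\|\,\|x\|\geq\|A^{-1}\|^{-2}\|x\|^{2}$, so $\|Sx\|\geq\|A^{-1}\|^{-2}\|x\|$ for all $x$, and taking the supremum over the unit ball gives $\|S\|\geq\|A^{-1}\|^{-2}$. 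The main obstacle is precisely this lower bound: one must be careful that the scalar-style Cauchy--Schwarz manipulation is legitimate in the module setting and that the frame lower bound is transferred correctly through the norm, since the $\mathcal{A}$-valued inequality $A\langle x,x\rangle A^{\ast}\leq\langle Tx,Tx\rangle$ does not pass to norms as a simple inequality without the detour through $\|Tx\|\geq\|A^{-1}\|^{-1}\|x\|$.
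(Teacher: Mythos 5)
Your proof is correct, and the algebraic part (selfadjointness, positivity, invertibility via Lemma \ref{3}(i) applied to the injective, closed-range operator $T$) matches the paper's in substance, though you obtain selfadjointness formally from $(T^{\ast}T)^{\ast}=T^{\ast}T$ while the paper recomputes it by moving $\Lambda_{w}^{\ast}\Lambda_{w}$ across the integral. Where you genuinely diverge is in the norm estimates. The paper sandwiches $\langle Sx,x\rangle$ between $A\langle x,x\rangle A^{\ast}$ and $B\langle x,x\rangle B^{\ast}$, passes to norms to get $\|A^{-1}\|^{-2}\|x\|^{2}\leq\|\langle Sx,x\rangle\|\leq\|B\|^{2}\|x\|^{2}$, and then concludes by ``taking the supremum over the unit ball'' --- which tacitly uses the fact that $\|S\|=\sup_{\|x\|\leq1}\|\langle Sx,x\rangle\|$ for a positive adjointable operator on a Hilbert $C^{\ast}$-module, a nontrivial identity the paper never justifies. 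You instead get the upper bound from the $C^{\ast}$-identity $\|S\|=\|T\|^{2}\leq\|B\|^{2}$ and the lower bound from Cauchy--Schwarz, $\|A^{-1}\|^{-2}\|x\|^{2}\leq\|Tx\|^{2}=\|\langle Sx,x\rangle\|\leq\|Sx\|\|x\|$, hence $\|Sx\|\geq\|A^{-1}\|^{-2}\|x\|$; both steps use only standard, elementary facts about adjointable operators, so your version is actually more self-contained and plugs the gap in the paper's final line. Your explicit caution about not passing the $\mathcal{A}$-valued inequality directly to norms, and routing instead through $\|Tx\|\geq\|A^{-1}\|^{-1}\|x\|$ from the proof of Theorem \ref{2.3}, is exactly the right care to take here.
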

\begin{proof}
	First we show, $S$ is a selfadjoint operator. By definition we have $\forall x, y\in U$
	\begin{align*}
	\langle Sx,y\rangle&=\left\langle\int_{\Omega}\Lambda_{w}^{\ast}\Lambda_{w}xd\mu(w),y\right\rangle\\
	&=\int_{\Omega}\langle\Lambda_{w}^{\ast}\Lambda_{w}x,y\rangle d\mu(w)\\
	&=\int_{\Omega}\langle x,\Lambda_{w}^{\ast}\Lambda_{w}y\rangle d\mu(w)\\
	&=\left\langle x,\int_{\Omega}\Lambda_{w}^{\ast}\Lambda_{w}yd\mu(w)\right\rangle\\
	&=\langle x,Sy\rangle.
	\end{align*}
	Then $S$ is a selfadjoint.
	
	By Lemma \ref{3} and Theorem \ref{2.3}, $S$ is invertible. Clearly $S$ is positive.
	
	By definition of a continuous $\ast$-g-frame we have
	\begin{equation*}
	A\langle x,x\rangle A^{\ast}\leq\int_{\Omega}\langle\Lambda_{w}x,\Lambda_{w}x\rangle d\mu(w)\leq B\langle x,x\rangle B^{\ast}.
	\end{equation*}
	So
	\begin{equation*}
	A\langle x,x\rangle A^{\ast}\leq\langle Sx,x\rangle\leq B\langle x,x\rangle B^{\ast}.
	\end{equation*}
	This give
	\begin{equation*}
	\|A^{-1}\|^{-2}\|x\|^{2}\leq\|\langle Sx,x\rangle\|\leq\|B\|^{2}\|x\|^{2}, \forall x\in U.
	\end{equation*}
	If we take supremum on all $x\in U$, where $\|x\|\leq1$, then $\|A^{-1}\|^{-2}\leq\|S\|\leq\|B\|^{2}$.
\end{proof}
\begin{theorem}
Let $\{\Lambda_{w}\in End_{\mathcal{A}}^{\ast}(U,V_{w}): w\in\Omega\}$ be a continuous $\ast$-g-frame for $U$, with lower and upper bounds $A$ and $B$, respectively and with the continuous $\ast$-g-frame operator $S$. Let $T\in End_{\mathcal{A}}^{\ast}(U)$ be invertible. Then $\{\Lambda_{w}T: w\in\Omega\}$ is a continuous $\ast$-g-frame for $U$ with continuous $\ast$-g-frame operator $T^{\ast}ST$.
\end{theorem}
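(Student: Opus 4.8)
The plan is to verify directly that $\{\Lambda_{w}T\}$ satisfies the two defining conditions of a continuous $\ast$-g-frame and then to compute its associated frame operator. The measurability condition is immediate: for each fixed $x\in U$ the element $Tx$ again lies in $U$, so the map $w\mapsto\Lambda_{w}(Tx)$ is measurable by the measurability hypothesis already assumed for the family $\{\Lambda_{w}\}$. The substance of the argument is therefore to produce strictly nonzero frame bounds, and I would obtain these by starting from the frame inequality \eqref{2.1} for $\{\Lambda_{w}\}$ evaluated at the vector $Tx$, namely
\[
A\langle Tx,Tx\rangle A^{\ast}\le\int_{\Omega}\langle\Lambda_{w}Tx,\Lambda_{w}Tx\rangle\,d\mu(w)\le B\langle Tx,Tx\rangle B^{\ast}.
\]
The goal then becomes to replace $\langle Tx,Tx\rangle$ by $\langle x,x\rangle$ on each side, up to a positive scalar factor.

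For the upper estimate, Lemma \ref{1} gives $\langle Tx,Tx\rangle\le\|T\|^{2}\langle x,x\rangle$; sandwiching this between $B$ and $B^{\ast}$ preserves the order, since $cac^{\ast}\le cbc^{\ast}$ whenever $a\le b$ in a $C^{\ast}$-algebra, and absorbing the positive scalar $\|T\|^{2}$ yields the upper bound $B'=\|T\|B$. For the lower estimate I would exploit the invertibility of $T$: writing $x=T^{-1}(Tx)$ and applying Lemma \ref{1} to $T^{-1}$ gives $\langle x,x\rangle\le\|T^{-1}\|^{2}\langle Tx,Tx\rangle$, equivalently $\|T^{-1}\|^{-2}\langle x,x\rangle\le\langle Tx,Tx\rangle$; sandwiching between $A$ and $A^{\ast}$ and absorbing the scalar produces the lower bound $A'=\|T^{-1}\|^{-1}A$. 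Both $A'$ and $B'$ are strictly nonzero because $A$ and $B$ are and because $T$ being invertible forces $\|T\|,\|T^{-1}\|>0$. This lower bound is where I expect the only real subtlety: in a Hilbert $C^{\ast}$-module one cannot in general bound $\langle Tx,Tx\rangle$ from below by a scalar multiple of $\langle x,x\rangle$, and it is precisely the existence and boundedness of $T^{-1}$ that rescues the argument.

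Finally, to identify the frame operator, I would compute
\[
S'x=\int_{\Omega}(\Lambda_{w}T)^{\ast}(\Lambda_{w}T)x\,d\mu(w)=\int_{\Omega}T^{\ast}\Lambda_{w}^{\ast}\Lambda_{w}Tx\,d\mu(w)=T^{\ast}\left(\int_{\Omega}\Lambda_{w}^{\ast}\Lambda_{w}(Tx)\,d\mu(w)\right)=T^{\ast}STx,
\]
where pulling the bounded adjointable operators $T^{\ast}$ and $T$ outside the Bochner integral is justified by the standard interchange property for bounded linear maps. Hence $S'=T^{\ast}ST$, which completes the proof.
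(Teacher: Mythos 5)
Your proposal is correct and follows essentially the same route as the paper: apply the frame inequality of $\{\Lambda_{w}\}$ at $Tx$, convert $\langle Tx,Tx\rangle$ into $\langle x,x\rangle$ on both sides to get the bounds $\|T^{-1}\|^{-1}A$ and $\|T\|B$, and then pull $T^{\ast}$ and $T$ through the integral to identify the frame operator as $T^{\ast}ST$. The only (harmless) difference is that for the lower estimate you apply Lemma \ref{1} to $T^{-1}$ to get $\|T^{-1}\|^{-2}\langle x,x\rangle\leq\langle Tx,Tx\rangle$, whereas the paper derives the same inequality from Lemma \ref{3} via $\|T^{-1}\|^{-2}\leq\|(T^{\ast}T)^{-1}\|^{-1}$; your version is, if anything, slightly more direct, and you also make explicit the measurability check that the paper omits.
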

\begin{proof}
	We have 
	\begin{equation}\label{eq11}
	A\langle Tx,Tx\rangle A^{\ast}\leq\int_{\Omega}\langle\Lambda_{w}Tx,\Lambda_{w}Tx\rangle d\mu(w)\leq B\langle Tx,Tx\rangle B^{\ast}, \forall x\in U.
	\end{equation}
	Using Lemma , we have $\|(T^{\ast}T)^{-1}\|^{-1}\langle x,x\rangle\leq\langle Tx,Tx\rangle$, $\forall x\in U$. Or $\|T^{-1}\|^{-2}\leq\|(T^{\ast}T)^{-1}\|^{-1}$. This implies
	\begin{equation}\label{eq22} 
		\|T^{-1}\|^{-1}A\langle x,x\rangle(\|T^{-1}\|^{-1}A)^{\ast}\leq A\langle Tx,Tx\rangle A^{\ast}, \forall x\in U.
	\end{equation}
	And we know that $\langle Tx,Tx\rangle\leq\|T\|^{2}\langle x,x\rangle$, $\forall x\in U$. This implies that
	\begin{equation}\label{eq33}
	B\langle Tx,Tx\rangle B^{\ast}\leq\|T\|B\langle x,x\rangle(\|T\|B)^{\ast}, \forall x\in U.
	\end{equation}
Using \eqref{eq11}, \eqref{eq22}, \eqref{eq33} we have
\begin{equation}
\|T^{-1}\|^{-1}A\langle x,x\rangle(\|T^{-1}\|^{-1}A)^{\ast}\leq\int_{\Omega}\langle\Lambda_{w}Tx,\Lambda_{w}Tx\rangle d\mu(w)\leq B\|T\|\langle x,x\rangle(B\|T\|)^{\ast}, \forall x\in U.
\end{equation}
So $\{\Lambda_{w}T: w\in\Omega\}$ is a continuous $\ast$-g-frame for $U$.

Moreover for every $x\in U$, we have
$$T^{\ast}STx=T^{\ast}\int_{\Omega}\Lambda_{w}^{\ast}\Lambda_{w}Txd\mu(w)=\int_{\Omega}T^{\ast}\Lambda_{w}^{\ast}\Lambda_{w}Txd\mu(w)=\int_{\Omega}(\Lambda_{w}T)^{\ast}(\Lambda_{w}T)xd\mu(w).$$ This completes the proof.	
\end{proof}
\begin{corollary}
	Let $\{\Lambda_{w}\in End_{\mathcal{A}}^{\ast}(U,V_{w}): w\in\Omega\}$ be a continuous $\ast$-g-frame for $U$, with continuous $\ast$-g-frame operator $S$. Then $\{\Lambda_{w}S^{-1}: w\in\Omega\}$ is a continuous $\ast$-g-frame for $U$.
\end{corollary}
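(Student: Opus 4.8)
The plan is to recognize this corollary as an immediate special case of the preceding theorem, with the invertible operator chosen to be $S^{-1}$ itself. First I would recall that the continuous $\ast$-g-frame operator $S$ was already shown to be bounded, positive, selfadjoint and invertible. Consequently its inverse $S^{-1}$ again belongs to $End_{\mathcal{A}}^{\ast}(U)$ and is itself invertible; moreover, since a selfadjoint adjointable operator has selfadjoint inverse, we have $(S^{-1})^{\ast}=S^{-1}$. This one observation is all the preparation that is needed.

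With this in hand, I would apply the previous theorem with the invertible operator taken to be $T:=S^{-1}$. That theorem states precisely that whenever $T\in End_{\mathcal{A}}^{\ast}(U)$ is invertible, the family $\{\Lambda_{w}T: w\in\Omega\}$ is again a continuous $\ast$-g-frame for $U$, with continuous $\ast$-g-frame operator $T^{\ast}ST$. Substituting $T=S^{-1}$ therefore shows at once that $\{\Lambda_{w}S^{-1}: w\in\Omega\}$ is a continuous $\ast$-g-frame for $U$, which is exactly the statement to be proved. One may further simplify the resulting frame operator: using $(S^{-1})^{\ast}=S^{-1}$, the expression $T^{\ast}ST$ collapses to $S^{-1}SS^{-1}=S^{-1}$, so the new system carries frame operator $S^{-1}$, which is the continuous analogue of the canonical dual frame.

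I do not expect any substantial obstacle here, since the entire content is already contained in the previous theorem. The only point that genuinely requires justification is the adjointability and invertibility of $S^{-1}$, together with the identity $(S^{-1})^{\ast}=S^{-1}$; all three follow immediately from the established properties of $S$. Thus the corollary is a direct instantiation of the theorem, and the proof amounts to verifying that $S^{-1}$ satisfies the hypotheses on $T$ and then invoking that result.
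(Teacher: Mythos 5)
Your proof is correct and takes essentially the same route as the paper, which simply applies the preceding theorem with $T=S^{-1}$. Your additional remarks on the adjointability and selfadjointness of $S^{-1}$ and the simplification of the frame operator to $S^{-1}$ are sound bonuses but not needed for the statement.
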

\begin{proof}
	Result the next theorem by taking $T=S^{-1}$.
\end{proof}	
\section{Stability problem for Continuous $\ast$-g-Frame in Hilbert $C^{\ast}$-Modules}
The question of stability plays an important role in various
fields of applied mathematics. The classical theorem of
the stability of a base is due to Paley and Wiener. It is based on the fact that a bounded operator T on a Banach space is invertible if we have: $\|I-T\|<1$.
\begin{theorem} [\cite{Paley} Paley-Wiener]
	Let $\{f_{i}\}_{i\in\mathbb{N}}$ be a basis of a Banach space $X$, and $\{g_{i}\}_{i\in\mathbb{N}}$ a sequence of vectors in $X$. If there exists a constant $\lambda\in[0,1)$ such that
	\begin{equation*}
		\Big\|\sum_{i\in\mathbb{N}}c_{i}(f_{i}-g_{i})\Big\|\leq\lambda\Big\|\sum_{i\in\mathbb{N}}c_{i}f_{i}\Big\|
	\end{equation*}
	for all finite sequence  $\{c_{i}\}_{i\in\mathbb{N}}$ of scalars, then $\{g_{i}\}_{i\in\mathbb{N}}$ is also a basis for $X$.
\end{theorem}
\begin{theorem}
Let $\{\Lambda_{w}\in End_{\mathcal{A}}^{\ast}(U,V_{w}): w\in\Omega\}$ be a continuous $\ast$-g-frame for $U$, with lower and upper bounds $A$ and $B$, respectively. Let $\Gamma_{w}\in End_{\mathcal{A}}^{\ast}(U,V_{w})$ for any $w\in\Omega$. Then the following are equivalent:
\begin{itemize}
	\item [(1)] $\{\Gamma_{w}\in End_{\mathcal{A}}^{\ast}(U,V_{w}): w\in\Omega\}$ is a continuous $\ast$-g-frame for $U$.
	\item[(2)] There exists a constant $M>0$, such that for any $x\in U$, one has
	\begin{multline}\label{3.1}
		\bigg\|\int_{\Omega}\langle(\Lambda_{w}-\Gamma_{w})x,(\Lambda_{w}-\Gamma_{w})x\rangle d\mu(w)\bigg\|\\
		\leq M\min\bigg(\bigg\|\int_{\Omega}\langle\Lambda_{w}x,\Lambda_{w}x\rangle d\mu(w)\bigg\|,\bigg\|\int_{\Omega}\langle\Gamma_{w}x,\Gamma_{w}x\rangle d\mu(w)\bigg\|\bigg).
	\end{multline}
\end{itemize}
\end{theorem}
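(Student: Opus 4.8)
The plan is to phrase everything through the analysis (frame transform) operators. Write $T_\Lambda x=\{\Lambda_w x\}$ and $T_\Gamma x=\{\Gamma_w x\}$, so that the three integrals appearing in \eqref{3.1} are exactly $\|T_\Lambda x\|^2$, $\|T_\Gamma x\|^2$ and $\|(T_\Lambda-T_\Gamma)x\|^2$, since $\langle T_\Lambda x,T_\Lambda x\rangle=\int_\Omega\langle\Lambda_w x,\Lambda_w x\rangle d\mu(w)$ and likewise for the others. From Theorem \ref{2.3} and its proof I record the norm frame inequality for $\Lambda$, namely $\|A^{-1}\|^{-1}\|x\|\le\|T_\Lambda x\|\le\|B\|\,\|x\|$ for all $x\in U$. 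The whole argument then reduces to elementary triangle-inequality estimates between $T_\Lambda x$, $T_\Gamma x$ and $(T_\Lambda-T_\Gamma)x$ in the Hilbert $C^{\ast}$-module $\oplus_{w\in\Omega}V_w$, followed by one appeal to Lemma \ref{3} to upgrade norm bounds to the operator-valued frame inequality.

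For $(1)\Rightarrow(2)$, assume $\{\Gamma_w\}$ is a continuous $\ast$-g-frame with bounds $A'$, $B'$, so that $\|A'^{-1}\|^{-1}\|x\|\le\|T_\Gamma x\|\le\|B'\|\,\|x\|$. The triangle inequality gives $\|(T_\Lambda-T_\Gamma)x\|\le\|T_\Lambda x\|+\|T_\Gamma x\|$; bounding each summand above by its upper frame bound and then replacing $\|x\|$ by $\|A^{-1}\|\,\|T_\Lambda x\|$ in one case and by $\|A'^{-1}\|\,\|T_\Gamma x\|$ in the other yields $\|(T_\Lambda-T_\Gamma)x\|\le(\|B\|+\|B'\|)\max(\|A^{-1}\|,\|A'^{-1}\|)\min(\|T_\Lambda x\|,\|T_\Gamma x\|)$. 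Squaring produces \eqref{3.1} with $M=(\|B\|+\|B'\|)^2\max(\|A^{-1}\|,\|A'^{-1}\|)^2$.

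For $(2)\Rightarrow(1)$, read \eqref{3.1} as the two estimates $\|(T_\Lambda-T_\Gamma)x\|\le\sqrt{M}\,\|T_\Lambda x\|$ and $\|(T_\Lambda-T_\Gamma)x\|\le\sqrt{M}\,\|T_\Gamma x\|$. The first, combined with $\|T_\Gamma x\|\le\|T_\Lambda x\|+\|(T_\Lambda-T_\Gamma)x\|$ and the upper bound for $\Lambda$, gives $\|T_\Gamma x\|\le(1+\sqrt{M})\|B\|\,\|x\|$; the second, combined with $\|T_\Lambda x\|\le\|T_\Gamma x\|+\|(T_\Lambda-T_\Gamma)x\|\le(1+\sqrt{M})\|T_\Gamma x\|$ and the lower bound for $\Lambda$, gives $\|T_\Gamma x\|\ge(1+\sqrt{M})^{-1}\|A^{-1}\|^{-1}\|x\|$. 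Thus $\|T_\Gamma x\|$ is squeezed between two positive multiples of $\|x\|$.

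It remains to convert these norm bounds into the operator-valued inequality \eqref{2.1}, and this is the step I expect to be the only real obstacle, since norm control does not by itself give an inequality between positive elements of $\mathcal{A}$. I would resolve it exactly as in Theorem \ref{2.3}: the upper bound shows $T_\Gamma$ is a well-defined bounded map into $\oplus_{w\in\Omega}V_w$, and the same computation as there shows it is adjointable with $T_\Gamma^{\ast}\{y_w\}=\int_\Omega\Gamma_w^{\ast}y_w\,d\mu(w)$; the lower bound shows $T_\Gamma$ is injective and bounded below, hence has closed range. Lemma \ref{3}(i) then applies to $T_\Gamma$, so $S_\Gamma=T_\Gamma^{\ast}T_\Gamma$ is invertible and satisfies $\|S_\Gamma^{-1}\|^{-1}\langle x,x\rangle\le\langle S_\Gamma x,x\rangle\le\|T_\Gamma\|^2\langle x,x\rangle$. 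Since $\langle S_\Gamma x,x\rangle=\int_\Omega\langle\Gamma_w x,\Gamma_w x\rangle d\mu(w)$, this is precisely \eqref{2.1} for $\{\Gamma_w\}$ with the scalar bounds $A'=\|S_\Gamma^{-1}\|^{-1/2}1_{\mathcal{A}}$ and $B'=\|T_\Gamma\|\,1_{\mathcal{A}}$, so $\{\Gamma_w\}$ is a continuous $\ast$-g-frame, completing the equivalence.
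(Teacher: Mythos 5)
Your proposal is correct, and its core computation coincides with the paper's: both directions rest on the triangle inequality in $\oplus_{w\in\Omega}V_{w}$ applied to $T_{\Lambda}x$, $T_{\Gamma}x$ and $(T_{\Lambda}-T_{\Gamma})x$, together with the scalar consequences $\|A^{-1}\|^{-1}\|x\|\le\|T_{\Lambda}x\|\le\|B\|\|x\|$ of the frame inequality. The difference, and it is a substantive one, is at the end of $(2)\Rightarrow(1)$: the paper stops once it has squeezed $\|T_{\Gamma}x\|$ between $(1+M^{1/2})^{-1}\|A^{-1}\|^{-1}\|x\|$ and $(1+M^{1/2})\|B\|\|x\|$ and simply declares $\{\Gamma_{w}\}$ a continuous $\ast$-g-frame, whereas the definition requires the inequality \eqref{2.1} between positive elements of $\mathcal{A}$, which does not follow from norm bounds alone. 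You identify this gap explicitly and close it: the norm bounds make $T_{\Gamma}$ bounded, injective and bounded below (hence closed range), the computation from Theorem \ref{2.3} makes it adjointable, and Lemma \ref{3}(i) then yields $\|(T_{\Gamma}^{\ast}T_{\Gamma})^{-1}\|^{-1}\langle x,x\rangle\le\langle T_{\Gamma}x,T_{\Gamma}x\rangle\le\|T_{\Gamma}\|^{2}\langle x,x\rangle$, i.e.\ \eqref{2.1} with the scalar bounds $\|(T_{\Gamma}^{\ast}T_{\Gamma})^{-1}\|^{-1/2}1_{\mathcal{A}}$ and $\|T_{\Gamma}\|1_{\mathcal{A}}$. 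This is exactly the step the paper needed. Two small remarks: your choice of $M$ in $(1)\Rightarrow(2)$ as a single constant dominating both one-sided estimates (via a maximum) is the right aggregation --- the paper's $M=\min\{(\|B\|\|C^{-1}\|+1)^{2},(\|D\|\|A^{-1}\|+1)^{2}\}$ should really be a maximum, since \eqref{3.1} requires one $M$ valid against both integrals simultaneously; and both you and the paper tacitly assume the measurability of $w\mapsto\Gamma_{w}x$, which strictly speaking must be hypothesized for the integrals in (2) to be meaningful.
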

\begin{proof}
	$(1)\Rightarrow(2)$. Suppose that $\{\Gamma_{w}\in End_{\mathcal{A}}^{\ast}(U,V_{w}): w\in\Omega\}$ is a continuous $\ast$-g-frame for $U$ with lower and upper bounds $C$ and $D$, respectively. Then for any $x\in U$, we have
	\begin{multline*}
\bigg\|\int_{\Omega}\langle(\Lambda_{w}-\Gamma_{w})x,(\Lambda_{w}-\Gamma_{w})x\rangle d\mu(w)\bigg\|^{\frac{1}{2}}=\big\|\{(\Lambda_{w}-\Gamma_{w})x\}_{w\in\Omega}\big\|\\
\leq\big\|\{\Lambda_{w}x\}_{x\in\Omega}\big\|+\big\|\{\Gamma_{w}x\}_{x\in\Omega}\big\|\\
=\bigg\|\int_{\Omega}\langle\Lambda_{w}x,\Lambda_{w}x\rangle d\mu(w)\bigg\|^{\frac{1}{2}}+\bigg\|\int_{\Omega}\langle\Gamma_{w}x,\Gamma_{w}x\rangle d\mu(w)\bigg\|^{\frac{1}{2}}\\
\leq\|B\|\|\langle x,x\rangle\|^{\frac{1}{2}}+\bigg\|\int_{\Omega}\langle\Gamma_{w}x,\Gamma_{w}x\rangle d\mu(w)\bigg\|^{\frac{1}{2}}\\
\leq\|B\|\|C^{-1}\|\bigg\|\int_{\Omega}\langle\Gamma_{w}x,\Gamma_{w}x\rangle d\mu(w)\bigg\|^{\frac{1}{2}}+\bigg\|\int_{\Omega}\langle\Gamma_{w}x,\Gamma_{w}x\rangle d\mu(w)\bigg\|^{\frac{1}{2}}\\
=\bigg(\|B\|\|C^{-1}\|+1\bigg)\bigg\|\int_{\Omega}\langle\Gamma_{w}x,\Gamma_{w}x\rangle d\mu(w)\bigg\|^{\frac{1}{2}}.
	\end{multline*}
	Similary we have
	\begin{align*}
\bigg\|\int_{\Omega}\langle(\Lambda_{w}-\Gamma_{w})x,(\Lambda_{w}-\Gamma_{w})x\rangle d\mu(w)\bigg\|^{\frac{1}{2}}\leq\bigg(\|D\|\|A^{-1}\|+1\bigg)\bigg\|\int_{\Omega}\langle\Lambda_{w}x,\Lambda_{w}x\rangle d\mu(w)\bigg\|^{\frac{1}{2}}.
	\end{align*}
Let $M=\min\Bigg\{\bigg(\|B\|\|C^{-1}\|+1\bigg)^{2},\bigg(\|D\|\|A^{-1}\|+1\bigg)^{2}\Bigg\}$, then the inequality \eqref{3.1} holds.

$(2)\Rightarrow(1)$. Suppose that the inequality \eqref{3.1} holds. For any $x\in U$, we have
	\begin{multline*}
\|A^{-1}\|^{-1}\|\langle x,x\rangle\|^{\frac{1}{2}}\leq\bigg\|\int_{\Omega}\langle\Lambda_{w}x,\Lambda_{w}x\rangle d\mu(w)\bigg\|^{\frac{1}{2}}\\
\leq\bigg\|\int_{\Omega}\langle(\Lambda_{w}-\Gamma_{w})x,(\Lambda_{w}-\Gamma_{w})x\rangle d\mu(w)\bigg\|^{\frac{1}{2}}+\bigg\|\int_{\Omega}\langle\Gamma_{w}x,\Gamma_{w}x\rangle d\mu(w)\bigg\|^{\frac{1}{2}}\\
\leq M^{\frac{1}{2}}\bigg\|\int_{\Omega}\langle\Gamma_{w}x,\Gamma_{w}x\rangle d\mu(w)\bigg\|^{\frac{1}{2}}+\bigg\|\int_{\Omega}\langle\Gamma_{w}x,\Gamma_{w}x\rangle d\mu(w)\bigg\|^{\frac{1}{2}}\\
=\big(1+M^{\frac{1}{2}}\big)\bigg\|\int_{\Omega}\langle\Gamma_{w}x,\Gamma_{w}x\rangle d\mu(w)\bigg\|^{\frac{1}{2}}.
\end{multline*}
Also we obtain
\begin{multline*}
\bigg\|\int_{\Omega}\langle\Gamma_{w}x,\Gamma_{w}x\rangle d\mu(w)\bigg\|^{\frac{1}{2}}\\
\leq\bigg\|\int_{\Omega}\langle(\Lambda_{w}-\Gamma_{w})x,(\Lambda_{w}-\Gamma_{w})x\rangle d\mu(w)\bigg\|^{\frac{1}{2}}+\bigg\|\int_{\Omega}\langle\Lambda_{w}x,\Lambda_{w}x\rangle d\mu(w)\bigg\|^{\frac{1}{2}}\\
\leq M^{\frac{1}{2}}\bigg\|\int_{\Omega}\langle\Lambda_{w}x,\Lambda_{w}x\rangle d\mu(w)\bigg\|^{\frac{1}{2}}+\bigg\|\int_{\Omega}\langle\Lambda_{w}x,\Lambda_{w}x\rangle d\mu(w)\bigg\|^{\frac{1}{2}}\\
=\big(1+M^{\frac{1}{2}}\big)\bigg\|\int_{\Omega}\langle\Lambda_{w}x,\Lambda_{w}x\rangle d\mu(w)\bigg\|^{\frac{1}{2}}\\
\leq\big(1+M^{\frac{1}{2}}\big)\|B\|\|\langle x,x\rangle\|^{\frac{1}{2}}.
\end{multline*}
So $\{\Gamma_{w}\in End_{\mathcal{A}}^{\ast}(U,V_{w}): w\in\Omega\}$ is a continuous $\ast$-g-frame for $U$.
\end{proof}
\bibliographystyle{amsplain}

\end{document}